\newtheorem{theorem}{Theorem}[section]
\newtheorem{proposition}[theorem]{Proposition}
\newtheorem{definition}[theorem]{Definition}
\newtheorem{corollary}[theorem]{Corollary}
\newtheorem{lemma}[theorem]{Lemma}
\newtheorem{example}[theorem]{Example}
\begin{document}

\begin{center}
\textbf{ REMARKS ON UNIFORMLY SYMMETRICALLY CONTINUOUS FUNCTIONS}
\end{center}

\begin{center}
Tammatada Khemaratchatakumthorn$^\dag$ and Prapanpong Pongsriiam$^\ddag$\footnote{corresponding author}
\end{center}
\centerline {$^\dag$Department of Mathematics, Faculty of Science,}
\centerline {Silpakorn University, Nakhon Pathom 73000, Thailand}
\centerline {e-mail : {\tt tammatada@gmail.com}}

\centerline {$^\ddag$Department of Mathematics, Faculty of Science,}
\centerline {Silpakorn University, Nakhon Pathom 73000, Thailand}
\centerline {e-mail : {\tt prapanpong@gmail.com}}


\begin{center}
\textbf{Abstract}
\end{center}
We give the definition of uniform symmetric continuity for functions defined on a nonempty subset of the real line. Then we investigate the properties of uniformly symmetrically continuous functions and compare them with those of symmetrically continuous functions and uniformly continuous functions. We obtain some characterizations of uniformly symmetrically continuous functions. Several examples are also given.
\vspace{0.3cm}

\noindent Keywords : continuity; uniform continuity; symmetric continuity; uniform symmetric continuity \\

\noindent 2000 Mathematics Subject Classification :  26A15; 26A03; 11Z05

\section{Introduction and Preliminaries}

The concept of continuity plays a very important role in real analysis, functional analysis, and topology. There are various types of continuities, four of which will be discussed in this article. Let $\mathbb R$ be the set of real numbers, $A$ a nonempty subset of $\mathbb R$, and $f: A \rightarrow \mathbb R$. 
\begin{itemize}
\item[(i)] The function $f$ is said to be continuous at a point $a\in A$ if for every $\varepsilon > 0$, there exists a $\delta > 0$ such that $|f(x) - f(a)| < \varepsilon$ whenever $x\in A$ and $|x- a| < \delta$. We say that $f$ is continuous on $A$ if it is continuous at each point of $A$. 
\item[(ii)] The function $f$ is said to be uniformly continuous on $A$ if for every  $\varepsilon > 0$, there exists a $\delta > 0$ such that $|f(x)-f(y)| < \varepsilon$ whenever $x, y\in A$ and $|x- y| < \delta$. 
\item [(iii)] The function $f$ is said to be symmetrically continuous at $a\in A$ if for every $\varepsilon > 0$, there exists a $\delta > 0$ such that $|f(a+h) - f(a-h)| < \varepsilon$ for all $h\in\mathbb R$ such that $|h| < \delta$, $a+h\in A$ and $a-h\in A$. We say that $f$ is symmetrically continuous on $A$ if it is symmetrically continuous at every point of $A$.
\end{itemize} 
Like continuous functions, symmetrically continuous functions have been studied thoroughly by many authors, see for example in \cite{Be}, \cite{Da}, \cite{Do}, \cite{H}, \cite{Ok}, \cite{Po1}, \cite{Po2}, \cite{Pon}, \cite{Ros}, \cite{S}, \cite{S2}, and \cite{Th}. In particular, Marcus \cite{Ma} writes an article on various types of symmetry on the real line such as symmetric sets, asymmetric sets, antisymmetric sets, symmetric functions, symmetric continuities, and symmetric derivatives. It is in this article \cite{Ma} that Marcus defines a concept of uniform symmetric continuity.

\begin{definition}\label{defnofunifsym}
A function $f:I\to\mathbb R$ (where $I$ is an interval) is said to be uniformly symmetrically continuous on $I$ if for every $\varepsilon > 0$, there exists a $\delta>0$ such that if $|h| < \delta$ and $a, a-h, a+h$ are in $I$, then $|f(a+h) - f(a-h)| < \varepsilon$.
\end{definition}

However, it turns out that uniform symmetric continuity and uniform continuity of the functions defined on an interval are equivalent.

\begin{proposition}\label{unifsymmathm1}
{\rm(Marcus \cite{Ma})} Let $I$ be a nonempty interval and $f:I\rightarrow \mathbb R$. Then $f$ is uniformly symmetrically continuous on $I$ if and only if $f$ is uniformly continuous on $I$.
 \end{proposition}

Considering Proposition \ref{unifsymmathm1}, Marcus \cite{Ma} posts the following two problems:
\begin{itemize}
\item[Problem 1] Let $f:I\to \mathbb R$ ($I$ is an open interval) be a symmetrically continuous function. Let $B\subseteq I$ be such that for every $\varepsilon > 0$, there exists a $\delta > 0$ such that if $|h| < \delta$ and $b, b+h, b-h\in B$, then $|f(b+h) - f(b-h)| < \varepsilon$. Determine cases when under this condition, $f$ is uniformly continuous on $I$.
\item[Problem 2] Find a new way to define uniform symmetric continuity so that it is not equivalent to uniform continuity. 
\end{itemize}
\indent Motivated by these problems, we now propose a new definition of uniform symmetric continuity which naturally extends Definition \ref{defnofunifsym}. 

\begin{definition}\label{newdefunifsym}
Let $A$ be a nonempty subset of $\mathbb R$ and $f:A\to \mathbb R$. We say that $f$ is uniformly symmetrically continuous on $A$ if for every $\varepsilon > 0$, there exists a $\delta > 0$ such that $|f(a+h) - f(a-h)| < \varepsilon$ for all $h\in\mathbb R$, $a\in A$ satisfying $|h| < \delta$, $a+h \in A$ and $a-h\in A$. In addition, if $B$ is a nonempty subset of $A$, then we say that $f$ is uniformly symmetrically continuous on $A$ with respect to $B$ if for every $\varepsilon > 0$, there exists a $\delta > 0$ such that $|f(b+h) - f(b-h)| < \varepsilon$ for all $h\in\mathbb R$, $b\in B$ satisfying $|h| < \delta$, $b+h \in A$ and $b-h\in A$.
\end{definition}

For some results about symmetric continuity on subsets of the real line, we refer the reader to \cite{S} and \cite{S2}. An answer to Problem 1 given by Kostyrko \cite{Ko} can be stated in our language as follows:\\
\begin{theorem}
{\rm(Kostyrko \cite{Ko})} Let $I$ be an open interval and let $f:I\to \mathbb R$ be symmetrically continuous on $I$. Then the following statements are equivalent:
\begin{itemize}
\item[(i)] there exists a dense subset $B$ of $I$ such that $f$ is uniformly symmetrically continuous on $I$ with respect to $B$.
\item[(ii)] $f$ is uniformly continuous on $I$.
\end{itemize}
\end{theorem}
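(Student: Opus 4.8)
The plan is to prove the two implications separately, with $(b)\Rightarrow(a)$ routine and $(a)\Rightarrow(b)$ carrying all the content. Throughout I write $g(a,h)=f(a+h)-f(a-h)$ whenever $a+h,a-h\in I$. For $(b)\Rightarrow(a)$: if $f$ is uniformly continuous on $I$ then, given $\varepsilon>0$, choosing $\delta>0$ with $|f(x)-f(y)|<\varepsilon$ whenever $|x-y|<\delta$ and taking $x=a+h$, $y=a-h$ yields $|g(a,h)|<\varepsilon$ as soon as $|h|<\delta/2$. Thus $f$ is uniformly symmetrically continuous on $I$ with respect to $B=I$, and since $I$ is dense in itself, (a) holds.

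For $(a)\Rightarrow(b)$ the strategy is to upgrade the hypothesis from ``with respect to the dense set $B$'' to ``with respect to all of $I$'', that is, to show that $f$ is uniformly symmetrically continuous on $I$ in the sense of Definition \ref{newdefunifsym} with $B=A=I$; Proposition \ref{unifsymmathm1} then delivers uniform continuity immediately. Fix $\varepsilon>0$ and let $\delta>0$ be the modulus supplied by (a) for tolerance $\varepsilon$. The engine of the argument is the reflection identity: for $b\in B$ and for $a,h$ with the relevant points in $I$,
\[
g(a,h)+g(2b-a,h)=g(b,a-b+h)-g(b,a-b-h),
\]
which follows by expressing $f(a\pm h)$ through the values of $f$ at the reflected points $2b-a\mp h$. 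Once $b$ is close enough to $a$ and $|h|<\delta/2$, both terms on the right are symmetric differences based at $b\in B$ with increments $a-b\pm h$ of modulus $<\delta$, hence each is $<\varepsilon$; so $g(a,h)+g(2b-a,h)=O(\varepsilon)$ for every $b\in B$ near $a$.

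I would then exploit that $b\mapsto 2b-a$ carries $B$ onto a dense set of centres near $a$. Setting $L=g(a,h)$, the identity gives $g(c,h)\approx-L$ on a dense set $S_{1}$ of centres $c$ near $a$; applying the identity again, now based at points of $S_{1}$, produces a second dense set $S_{2}$ on which $g(\cdot,h)\approx+L$. To force $L\approx0$ I would locate one centre $c^{\ast}$ at which $g(\cdot,h)$ is genuinely continuous and which is approached both by $S_{1}$ and by $S_{2}$: continuity at $c^{\ast}$ then pins $g(c^{\ast},h)$ simultaneously near $-L$ and near $+L$, whence $|L|=O(\varepsilon)$. Such a $c^{\ast}$ exists because $g(\cdot,h)=f(\cdot+h)-f(\cdot-h)$ is continuous on $(G-h)\cap(G+h)$, where $G$ is the set of continuity points of $f$; since a symmetrically continuous function on an interval is pointwise discontinuous, so that $G$ is a dense $G_{\delta}$ (cf.\ \cite{Th}), this set is again a dense $G_{\delta}$ by the Baire category theorem and meets every neighbourhood of $a$. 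Tracking the finitely many $O(\varepsilon)$ errors gives $|g(a,h)|\le c\varepsilon$ for all $a\in I$ and $|h|<\delta/2$ with an absolute constant $c$, and rescaling $\varepsilon$ at the outset yields exactly the full uniform symmetric continuity sought.

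The routine points—that all reflected arguments remain in $I$ (here the openness of $I$ is used, so that $a$, $2b-a$, and $2b-a\mp h$ stay in $I$ once $b$ is near $a$ and $|h|$ is small) and the bookkeeping of the error constants—I expect to be straightforward. The genuine obstacle is the structural input that the continuity points of $f$ are dense: this is precisely what converts the algebraic fact ``$g(\cdot,h)$ is approximately odd about every $b\in B$'' into the analytic conclusion $g(\cdot,h)\approx0$. I would therefore either invoke the classical pointwise-discontinuity theorem for symmetrically continuous functions from \cite{Th}, or, for a self-contained treatment, first derive the density of continuity points directly from the strong hypothesis (a) before running the reflection argument.
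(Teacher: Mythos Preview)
The paper does not contain a proof of this theorem: it is stated with attribution to Kostyrko \cite{Ko} and no argument is supplied, so there is no ``paper's proof'' to compare against. Your proposal must therefore be judged on its own merits.

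The direction $(b)\Rightarrow(a)$ is immediate, as you note. For $(a)\Rightarrow(b)$, your reflection identity
\[
g(a,h)+g(2b-a,h)=g(b,a-b+h)-g(b,a-b-h)
\]
is correct and is indeed the natural algebraic tool here. The overall strategy---upgrade ``uniformly symmetrically continuous with respect to $B$'' to ``uniformly symmetrically continuous on $I$'' and then invoke Proposition~\ref{unifsymmathm1}---is sound. The structural input you single out, that a symmetrically continuous function on an interval has a dense (indeed residual) set of continuity points, is a genuine and nontrivial theorem (see Thomson \cite{Th}); you are right that the argument hinges on it and right to flag it explicitly.

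Two places deserve more care. First, the phrase ``applying the identity again, now based at points of $S_{1}$'' is ambiguous: the identity requires the \emph{centre} of reflection to lie in $B$, not in $S_{1}$. What you need is to take $c\in S_{1}$ as the new ``$a$'' and choose a fresh $b'\in B$ near $c$; then $g(2b'-c,h)\approx -g(c,h)\approx +L$, and the points $2b'-c=a+2(b'-b)$ do form a dense set near $a$. Second, the passage from ``$g(\cdot,h)\approx \pm L$ on dense sets near $a$'' to ``$L\approx 0$'' really does require a continuity point $c^{\ast}$ of $g(\cdot,h)$ inside the common neighbourhood where both approximations hold; you should make explicit that such a neighbourhood (of radius comparable to $\delta$) exists and contains a continuity point because $(G-h)\cap(G+h)$ is a dense $G_{\delta}$. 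With these clarifications the error bookkeeping gives $|g(a,h)|\le 3\varepsilon$ for $|h|<\delta/2$, which suffices. Your alternative suggestion of deriving the density of continuity points directly from hypothesis~(a) is optimistic; I would rely on the cited classical result.
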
  

Extending Definition \ref{defnofunifsym} to Definition \ref{newdefunifsym} gives us a solution to Problem 2: uniform symmetric continuity and uniform continuity are not equivalent in the context of Definition \ref{newdefunifsym} (See Theorem \ref{unifsymdiathm1}, Examples \ref{unifsymdiaexam5}, \ref{unifsymdiaexam2.9real}, \ref{example3.3section3}, and \ref{unifsymexam3}).

The purpose of this article is to investigate further the properties of uniformly symmetrically continuous functions defined on a nonempty subset of $\mathbb R$ and compare them with those of continuous functions, uniformly continuous functions and symmetrically continuous functions defined on the same domain. We will give basic relations between these four types of continuities in Section 2. Then we will give some characterizations of uniformly symmetrically continuous functions in Sections 3 and 4. We end this section by giving some propositions that will be used later.

\begin{proposition}\label{pong3star}
(\cite{Po1} and \cite{Po2}) Let $f:A\to\mathbb R$, and let $a\in A$ be an interior point of $A$. Assume that $\lim_{x\to a^+}f(x)$ and $\lim_{x\to a^-}f(x)$ exist in $\mathbb R$. Then $f$ is symmetrically continuous at $a$ if and only if $\displaystyle \lim_{x\to a^+}f(x) = \lim_{x\to a^-}f(x)$. 
\end{proposition}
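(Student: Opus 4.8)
The plan is to write $L^{+} = \lim_{x\to a^{+}} f(x)$ and $L^{-} = \lim_{x\to a^{-}} f(x)$ and prove the two implications separately, the whole argument resting on the observation that since $a$ is an interior point of $A$ there is an $\eta > 0$ with $(a-\eta, a+\eta) \subseteq A$; consequently whenever $0 < |h| < \eta$ both $a+h$ and $a-h$ lie in $A$. This is what makes the symmetric continuity condition at $a$ nonvacuous for all small $h$, and it lets me pass freely to the one-sided limits as $h \to 0^{+}$.

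For the forward direction, suppose $f$ is symmetrically continuous at $a$. As $h \to 0^{+}$ we have $f(a+h) \to L^{+}$ and $f(a-h) \to L^{-}$, so $|f(a+h) - f(a-h)| \to |L^{+} - L^{-}|$. Fix $\varepsilon > 0$ and let $\delta > 0$ be the number supplied by symmetric continuity; shrinking it so that $\delta \le \eta$, the inequality $|f(a+h) - f(a-h)| < \varepsilon$ holds for every $h$ with $0 < h < \delta$. Letting $h \to 0^{+}$ in this inequality yields $|L^{+} - L^{-}| \le \varepsilon$, and since $\varepsilon$ was arbitrary we conclude $L^{+} = L^{-}$.

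For the reverse direction, suppose $L^{+} = L^{-}$ and denote the common value by $L$. Given $\varepsilon > 0$, the definitions of the one-sided limits provide $\delta_{+}, \delta_{-} > 0$ such that $|f(x) - L| < \varepsilon/2$ whenever $x \in A$ and $0 < x - a < \delta_{+}$, and likewise $|f(x) - L| < \varepsilon/2$ whenever $x \in A$ and $0 < a - x < \delta_{-}$. Put $\delta = \min(\delta_{+}, \delta_{-}, \eta)$. If $0 < |h| < \delta$ with $a+h, a-h \in A$, then $a+h$ and $a-h$ lie on opposite sides of $a$ at distance $|h| < \delta$ from it, so each of $|f(a+h) - L|$ and $|f(a-h) - L|$ is below $\varepsilon/2$; the triangle inequality then gives $|f(a+h) - f(a-h)| < \varepsilon$. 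The case $h = 0$ is trivial, and the two signs of $h$ are handled identically since the quantity $|f(a+h) - f(a-h)|$ is unchanged under $h \mapsto -h$. This establishes symmetric continuity at $a$.

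I expect no serious obstacle here; the argument is routine epsilon--delta bookkeeping. The only points requiring care are invoking the interior point hypothesis to guarantee that $a \pm h \in A$ for all small $h$ (without it the one-sided limits could fail to interact through the symmetric condition), and observing the symmetry $h \mapsto -h$ so that controlling $h > 0$ suffices for both signs.
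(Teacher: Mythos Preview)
Your proof is correct. The paper itself does not prove this proposition but simply cites references \cite{Po1, Po2}, so there is no in-paper argument to compare against; you have supplied the straightforward $\varepsilon$--$\delta$ argument that those references presumably contain.
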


The next proposition is well known but it should be compared with Theorem \ref{unifsymcrithm1}. So we give it here for the reader's convenience. 
\begin{proposition}\label{propBa}
\cite{Ba} A function $f:A\to \mathbb R$ is uniformly continuous on $A$ if and only if for every sequence $(x_n)$ and $(y_n)$ in $A$ if $|x_n-y_n|\to 0$ as $n\to \infty$, then $|f(x_n)-f(y_n)| \to 0$ as $n\to \infty$. 
\end{proposition}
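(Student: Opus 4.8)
The plan is to establish the two implications separately, working directly from the $\varepsilon$--$\delta$ definition of uniform continuity given in part (ii) of the Introduction and from its logical negation.

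For the forward direction I would assume $f$ is uniformly continuous on $A$ and take arbitrary sequences $(x_n)$ and $(y_n)$ in $A$ with $|x_n - y_n| \to 0$. Fixing $\varepsilon > 0$, uniform continuity supplies a $\delta > 0$ such that $|f(x) - f(y)| < \varepsilon$ whenever $x, y \in A$ and $|x - y| < \delta$. Since $|x_n - y_n| \to 0$, there is an index $N$ beyond which $|x_n - y_n| < \delta$, and therefore $|f(x_n) - f(y_n)| < \varepsilon$ for all $n \geq N$. As $\varepsilon$ was arbitrary, this yields $|f(x_n) - f(y_n)| \to 0$.

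For the converse I would argue by contraposition. Assuming $f$ is \emph{not} uniformly continuous on $A$, I negate the definition to obtain a single $\varepsilon_0 > 0$ with the property that for every $\delta > 0$ there exist points $x, y \in A$ satisfying $|x - y| < \delta$ yet $|f(x) - f(y)| \geq \varepsilon_0$. Applying this with $\delta = 1/n$ for each $n \in \mathbb N$ produces sequences $(x_n)$ and $(y_n)$ in $A$ with $|x_n - y_n| < 1/n$, so that $|x_n - y_n| \to 0$, while $|f(x_n) - f(y_n)| \geq \varepsilon_0$ for every $n$, so that $|f(x_n) - f(y_n)| \not\to 0$. These sequences violate the stated hypothesis, which completes the contrapositive and hence the proof.

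There is no substantial obstacle here; the argument is a direct transcription of the definitions. The only steps that require care are writing the negation of uniform continuity correctly (a single bad $\varepsilon_0$ with $\delta$ quantified afterward) and invoking a countable choice to extract, for each $\delta = 1/n$, a witnessing pair $(x_n, y_n)$ that assembles into the two desired sequences.
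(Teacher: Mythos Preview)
Your argument is correct and is exactly the standard proof of this well-known characterization. The paper does not give its own proof but simply refers the reader to Bartle--Sherbert \cite{Ba}, whose argument is the one you have reproduced.
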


\section{Basic Relations}

In connection with four types of the continuities mentioned above, the problem arises to investigate their relation. For a given set $A\subseteq \mathbb R$, we define  
\begin{itemize}
\item[] $USC = \{f: A \rightarrow \mathbb R\;|\; f\;\text{is uniformly symmetrically continuous on $A$}\}$,
\item[] $UC = \{f: A \rightarrow \mathbb R\;|\; f\;\text{is uniformly continuous on $A$}\}$,
\item[] $C = \{f: A \rightarrow \mathbb R\;|\; f\;\text{is continuous on $A$}\}$,  and 
\item[] $SC = \{f: A \rightarrow \mathbb R\;|\; f\;\text{is symmetrically continuous on $A$}\}$.
\end{itemize}
Then we have the following results.
\begin{equation}\label{eq1}
\text{$UC\subseteq C$ but $C\nsubseteq UC$},
\end{equation}
\begin{equation}\label{eq2}
\text{$C\subseteq SC$ but $SC\nsubseteq C$},
\end{equation}
\begin{equation}\label{eq3}
\text{$USC\subseteq SC$ but $SC \nsubseteq USC$},
\end{equation}
\begin{equation}\label{eq4}
\text{$UC\subseteq USC$ but $USC \nsubseteq UC$},
\end{equation}
\begin{equation}\label{eq5}
\text{$USC \nsubseteq C$ and $C \nsubseteq USC$}.
\end{equation}
$$
\begin{diagram}
\node{} \node{UC}\arrow{sw,l}{(4)}\arrow{se,t}{(1)}\node{}\\
\node{USC}\arrow{se,b}{(3)}\node{(5)} \node{C}\arrow{sw,r}{(2)}\\
\node{}\node{SC}\node{}
\end{diagram}
$$

It is well known that every uniformly continuous function is continuous and the function $f: (0,1) \rightarrow \mathbb R$ given by $f(x)=\frac{1}{x}$ is continuous on $(0,1)$ but is not uniformly continuous on $(0,1)$. In addition, it is easy to see that every continuous function is symmetrically continuous and the function $f: \mathbb R \rightarrow \mathbb R$ given by $f(x)=1$ if $x\neq 0$ and $f(x)=0$ if $x=0$ is symmetrically continuous but is not continuous. We refer the reader to \cite{Po1} and \cite{Po2} for more details. So \eqref{eq1} and \eqref{eq2} are proved. We will give theorems and examples to prove \eqref{eq3}, \eqref{eq4} and \eqref{eq5}.
First, we show that one direction of Proposition \ref{unifsymmathm1} still holds. 

\begin{theorem}\label{unifsymdiathm1}
Let $A$ be a nonempty subset of $\mathbb R$ and $f: A \rightarrow \mathbb R$. If $f$ is uniformly continuous on $A$, then $f$ is uniformly symmetrically continuous on $A$.
\end{theorem}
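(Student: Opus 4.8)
The plan is to deduce uniform symmetric continuity directly from uniform continuity by a simple change of variables, with no sequences or $\varepsilon$--$\delta$ trickery beyond a harmless factor of two. The key observation is that the two arguments $a+h$ and $a-h$ occurring in the symmetric increment $|f(a+h)-f(a-h)|$ are themselves an \emph{ordinary} pair of points of $A$, and their mutual distance is $|(a+h)-(a-h)| = 2|h|$. Thus the symmetric condition is nothing more than the usual increment condition applied to these two points, and controlling $|h|$ automatically controls their separation.

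Concretely, given $\varepsilon > 0$, I would first invoke the uniform continuity of $f$ on $A$ to obtain a $\delta > 0$ such that $|f(x)-f(y)| < \varepsilon$ whenever $x,y\in A$ and $|x-y| < \delta$. I would then set $\delta' = \delta/2$ and claim that this $\delta'$ witnesses uniform symmetric continuity. To verify the claim, suppose $a\in A$ and $h\in\mathbb R$ satisfy $|h| < \delta'$, $a+h\in A$, and $a-h\in A$. Putting $x = a+h$ and $y = a-h$, both points lie in $A$ by hypothesis, and $|x-y| = 2|h| < 2\delta' = \delta$; hence uniform continuity gives $|f(a+h)-f(a-h)| = |f(x)-f(y)| < \varepsilon$. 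As $\varepsilon>0$ was arbitrary, $f$ is uniformly symmetrically continuous on $A$.

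Since the implication is one-directional and collapses to a single substitution together with the triangle-free identity $|(a+h)-(a-h)|=2|h|$, I do not expect any genuine obstacle here; the only point requiring even minimal care is the rescaling $\delta'=\delta/2$, which ensures that $|h|<\delta'$ forces the separation $2|h|$ below $\delta$. I would note that the reverse implication is exactly what fails once $A$ is not an interval, and that this failure is supplied by the examples cited immediately after the statement, so the present theorem records precisely the \emph{surviving} half of Proposition \ref{unifsymmathm1}.
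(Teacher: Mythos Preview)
Your proof is correct and follows exactly the paper's own argument: choose the $\delta$ from uniform continuity, halve it, and apply the uniform continuity estimate to the pair $x=a+h$, $y=a-h$ using $|x-y|=2|h|<\delta$. There is nothing to add.
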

\begin{proof}
Assume that $f$ is uniformly continuous on $A$. Let $\varepsilon >0$ be given. Then there exists a $\delta>0$ such that $|f(x)-f(y)|<\varepsilon$ whenever $x, y\in A$ and $|x-y|<\delta$. Let $h\in \mathbb R$, $a\in A$, $|h|< \frac{\delta}{2}$, $a+h\in A$ and $a-h\in A$. Since $a+h, a-h\in A$ and $|(a+h)-(a-h)| = |2h|<\delta$, we obtain $|f(a+h)-f(a-h)|<\varepsilon$. This shows that $f$ is uniformly symmetrically continuous on $A$.
\end{proof}

It is obvious that every uniformly symmetrically continuous function is symmetrically continuous. Let us record this as a proposition.
\begin{proposition}\label{unifsymdiapro2}
Let $A$ be a nonempty subset of $\mathbb R$ and $f: A \rightarrow \mathbb R$. If $f$ is uniformly symmetrically continuous on $A$, then it is symmetrically continuous on $A$.
\end{proposition}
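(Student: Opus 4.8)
The plan is to unwind the definitions and observe that the statement follows by specializing the uniform $\delta$ to a fixed point. The key observation is that Definition \ref{newdefunifsym} produces, for each $\varepsilon > 0$, a single $\delta > 0$ that controls $|f(a+h) - f(a-h)|$ simultaneously for every admissible $a \in A$, whereas symmetric continuity at a point $a$ (item (iii) of the introduction) only asks for the existence of such a $\delta$ for that particular $a$. Thus the uniform hypothesis is formally stronger, and the implication is a matter of dropping a quantifier.

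Concretely, I would begin by fixing an arbitrary point $a \in A$ and an arbitrary $\varepsilon > 0$. Applying the hypothesis that $f$ is uniformly symmetrically continuous on $A$, I obtain a $\delta > 0$ such that $|f(b+h) - f(b-h)| < \varepsilon$ for every $b \in A$ and every $h \in \mathbb R$ with $|h| < \delta$, $b+h \in A$ and $b-h \in A$. I would then simply restrict attention to the choice $b = a$: for this single point, the same $\delta$ guarantees that $|f(a+h) - f(a-h)| < \varepsilon$ whenever $|h| < \delta$, $a+h \in A$ and $a-h \in A$, which is exactly the defining condition for symmetric continuity at $a$.

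Since $a \in A$ was arbitrary, concluding that $f$ is symmetrically continuous at every point of $A$, and hence symmetrically continuous on $A$, is immediate. I do not anticipate any genuine obstacle here; the only point worth stating carefully is the direction of the quantifier interchange, namely that ``there exists $\delta$ valid for all $a$'' trivially yields ``for each $a$ there exists $\delta$,'' with the uniform $\delta$ serving as a witness for each individual point.
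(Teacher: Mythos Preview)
Your proposal is correct and is exactly the argument the paper has in mind: the paper's own proof simply reads ``This follows immediately from the definition,'' and your write-up merely makes that immediacy explicit by specializing the uniform $\delta$ to a fixed $a$.
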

\begin{proof}
This follows immediately from the definition.
\end{proof}

Next we will give a sequential criterion for uniform symmetric continuity of functions, which should be compared with Proposition \ref{propBa}. Remark that by Definition \ref{newdefunifsym}, a function $f:A\to \mathbb R$ is uniformly symmetrically continuous on $A$ with respect to $A$ if and only if $f$ is uniformly symmetrically continuous on $A$. 

\begin{theorem}\label{unifsymcrithm1}
{\rm(Sequential criterion)} Let $A$, $B$ be nonempty subsets of  $\mathbb R$, $B\subseteq A$, and $f:A\rightarrow \mathbb R$. Then $f$ is uniformly symmetrically continuous on $A$ with respect to $B$ if and only if for each sequence $(x_n)$ and $(y_n)$ in $A$, if $|x_n-y_n|\rightarrow 0$ as $n\rightarrow \infty$ and $\frac{x_n+y_n}{2}\in B$ for every $n$, then $|f(x_n)-f(y_n)|\rightarrow 0$ as $n\rightarrow \infty$. In particular, $f$ is uniformly symmetrically continuous on $A$ if and only if for each sequence $(x_n)$ and $(y_n)$ in $A$, if $|x_n-y_n|\rightarrow 0$ as $n\rightarrow \infty$ and $\frac{x_n+y_n}{2}\in A$ for every $n$, then $|f(x_n)-f(y_n)|\rightarrow 0$ as $n\rightarrow \infty$.
\end{theorem}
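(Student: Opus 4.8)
The plan is to mirror the proof of the sequential criterion for uniform continuity (Proposition \ref{propBa}), exploiting the change of variables between a symmetric pair $(b,h)$ and an ordered pair of points $(x,y)$ in $A$. Specifically, given $b\in B$ and $h\in\mathbb R$ I would set $x=b+h$ and $y=b-h$, so that conversely $b=\frac{x+y}{2}$ and $h=\frac{x-y}{2}$. Under this correspondence the condition ``$b\in B$, $b\pm h\in A$, $|h|<\delta$'' becomes ``$x,y\in A$, $\frac{x+y}{2}\in B$, $|x-y|<2\delta$,'' and the quantity $|f(b+h)-f(b-h)|$ is exactly $|f(x)-f(y)|$. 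This dictionary is what makes the two formulations interchangeable, and both implications are just translations through it.

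For the forward implication I would assume $f$ is uniformly symmetrically continuous on $A$ with respect to $B$ and take sequences $(x_n),(y_n)$ in $A$ with $|x_n-y_n|\to 0$ and $\frac{x_n+y_n}{2}\in B$ for all $n$. Given $\varepsilon>0$, choose $\delta>0$ from the definition, and then $N$ so large that $|x_n-y_n|<2\delta$ for $n\ge N$. Writing $b_n=\frac{x_n+y_n}{2}\in B$ and $h_n=\frac{x_n-y_n}{2}$, the requirements $b_n+h_n=x_n\in A$, $b_n-h_n=y_n\in A$, and $|h_n|<\delta$ are all met, so the definition yields $|f(x_n)-f(y_n)|=|f(b_n+h_n)-f(b_n-h_n)|<\varepsilon$ for $n\ge N$. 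Hence $|f(x_n)-f(y_n)|\to 0$.

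For the converse I would argue by contraposition. If $f$ is \emph{not} uniformly symmetrically continuous on $A$ with respect to $B$, then there is an $\varepsilon_0>0$ such that for each $\delta=\frac1n$ one can select $b_n\in B$ and $h_n\in\mathbb R$ with $|h_n|<\frac1n$, $b_n+h_n\in A$, $b_n-h_n\in A$, yet $|f(b_n+h_n)-f(b_n-h_n)|\ge\varepsilon_0$. Setting $x_n=b_n+h_n$ and $y_n=b_n-h_n$ produces sequences in $A$ with $|x_n-y_n|=2|h_n|<\frac2n\to 0$ and $\frac{x_n+y_n}{2}=b_n\in B$, while $|f(x_n)-f(y_n)|\ge\varepsilon_0\not\to 0$, contradicting the sequential hypothesis. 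The ``in particular'' clause is then immediate on taking $B=A$, using the remark that uniform symmetric continuity on $A$ with respect to $A$ coincides with uniform symmetric continuity on $A$.

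I expect no serious obstacle here; the content is entirely the substitution $b=\frac{x+y}{2}$, $h=\frac{x-y}{2}$, and the only points requiring care are bookkeeping ones, namely matching the factor of $2$ between $|h|<\delta$ and $|x-y|<2\delta$, and ensuring in the converse that the extracted midpoint $b_n$ genuinely lies in $B$ (not merely in $A$), so that $\frac{x_n+y_n}{2}\in B$ exactly as the criterion demands.
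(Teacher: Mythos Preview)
Your proof is correct and follows essentially the same approach as the paper: both directions are handled via the substitution $b=\frac{x+y}{2}$, $h=\frac{x-y}{2}$, with the converse argued by contraposition using $\delta=\frac{1}{n}$. The only cosmetic differences are that the paper chooses $N$ with $|x_n-y_n|<\delta$ (so $|h|<\delta/2<\delta$) rather than your $|x_n-y_n|<2\delta$, and it assigns $x_n=b_n-h_n$, $y_n=b_n+h_n$ rather than the reverse.
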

\begin{proof}
First, assume that $f$ is uniformly symmetrically continuous on $A$ with respect to $B$ and let $(x_n)$, $(y_n)$ be sequences in $A$ such that $|x_n-y_n|\rightarrow 0$ as $n\rightarrow \infty$ and $\frac{x_n+y_n}{2}\in B$ for every $n$. To show that $|f(x_n)-f(y_n)|\rightarrow 0$, let $\varepsilon>0$ be given. Then there exists a $\delta>0$ such that 
\begin{equation}\label{unifsymneweq6}
\text{$|f(b+h)-f(b-h)|<\varepsilon$ for all $h\in\mathbb R$, $b\in B$, $|h|<\delta$, $b+h, b-h\in A$}
\end{equation}
Since $|x_n-y_n|\rightarrow 0$, there is an $N\in \mathbb N$ such that $|x_n-y_n|<\delta$ for all $n\geq N$. Now let $n\geq N$, $h=\frac{x_n-y_n}{2}$ and $b = \frac{x_n+y_n}{2}$. Then $|h|<\delta$, $b\in B$, and $\{b+h, b-h\} = \{x_n,y_n\}\subseteq A$. Therefore, by \eqref{unifsymneweq6}, we obtain
$$
|f(x_n)-f(y_n)| = |f(b+h)-f(b-h)| < \varepsilon.
$$
Next suppose that $f$ is not uniformly symmetrically continuous on $A$ with respect to $B$. Then there is an $\varepsilon>0$ such that for every $\delta >0$, there are $h\in\mathbb R$ and $b\in B$ satisfying $\left|h\right|<\delta$, $b+h, b-h\in A$, and $\left|f(b+h)-f(b-h)\right|\geq \varepsilon$. This implies that for each $n\in\mathbb N$, there are $h_n\in \mathbb R$, $b_n\in B$ such that 
$$
\left|h_n\right|<\frac{1}{n}, b_n+h_n\in A, b_n-h_n\in A, \;\text{and}\; \left|f(b_n+h_n)-f(b_n-h_n)\right|\geq \varepsilon.
$$
For each $n\in\mathbb N$, let $x_n = b_n-h_n$, $y_n = b_n+h_n$. Then $(x_n)$ and $(y_n)$ are sequences in $A$, $|x_n-y_n| = |2h_n| < \frac{2}{n}\rightarrow 0$ as $n\rightarrow \infty$, $\frac{x_n+y_n}{2} = b_n\in B$ for every $n$ but $|f(x_n)-f(y_n)|\geq \varepsilon$. This completes the proof.
\end{proof}

\begin{example}\label{unifsymdiaexam3}
Let $f:(0,\infty)\rightarrow \mathbb R$ be given by $f(x) = \frac{1}{x}$. Then $f$ is continuous on $(0,\infty)$ and by \eqref{eq2}, it is also symmetrically continuous on $(0,\infty)$. For each $n\in \mathbb N$, let $x_n = \frac{3}{n}$, $y_n = \frac{1}{n}$. Then $(x_n)$ and $(y_n)$ are sequences in $(0,\infty)$, $|x_n-y_n| = \left|\frac{2}{n}\right|\to 0$ as $n\to\infty$, $\frac{x_n+y_n}{2} = \frac{2}{n} \in (0,\infty)$ for every $n\in \mathbb N$, but $|f(x_n)-f(y_n)| = \frac{2n}{3}$ which does not converge to zero. By Theorem \ref{unifsymcrithm1}, we see that $f$ is not uniformly symmetrically continuous on $(0,\infty)$.
\end{example}

Next we will give a uniformly symmetrically continuous function which is not continuous. To obtain such a function, we will use Theorem \ref{unifsymcrithm1} and an argument from elementary number theory. 

\begin{example}\label{unifsymdiaexam5}
Throughout this example, let $p$ denote a (positive) prime. Let $A = \left\{\frac{1}{p}\mid\text{$p$ is a prime}\right\}\cup\{0\}$. Define $f:A\rightarrow \mathbb R$ by $f\left(\frac{1}{p}\right) = 1$ for each prime $p$ and $f(0)=0$. We will prove that $f$ is uniformly symmetrically continuous on $A$ but is not continuous at $0$.
\end{example}
\begin{proof}
We have $\frac{1}{p}\rightarrow 0$ as $p\rightarrow \infty$ but $f\left(\frac{1}{p}\right)$ does not converge to $f(0)$, so $f$ is not continuous at $0$. Next we will show that $f$ is uniformly symmetrically continuous on $A$, by applying Theorem \ref{unifsymcrithm1}. So we let $(x_n)$ and $(y_n)$ be sequences in $A$ such that $|x_n-y_n|\rightarrow 0$ as $n\rightarrow \infty$ and $\frac{x_n+y_n}{2}\in A$ for all $n\in \mathbb N$. Now we only need to prove $|f(x_n)-f(y_n)|\rightarrow 0$ as $n\rightarrow \infty$. In fact, we can obtain a stronger result that $x_n = y_n$ for every $n\in \mathbb N$. Since $2p$ is even and larger than $2$ but an element of $A$ is either $0$, $\frac12$, or the reciprocal of an odd number, we see that
\begin{equation}\label{equnifsymnumber1}
\frac{0+\frac{1}{p}}{2} = \frac{1}{2p}\notin A.
\end{equation}
 Next suppose that there are distinct primes $p$, $q$ and $r$ such that $\frac{\frac{1}{p}+\frac{1}{q}}{2} = \frac{1}{r}$. Then $r(p+q) = 2pq$. So $p$ divides $r(p+q)$. This implies that $p\mid q$ or $p\mid r$, a contradiction. This shows that, for each distinct prime $p$ and $q$, we have 
 \begin{equation}\label{equnifsymnumber2}
 \frac{\frac{1}{p}+\frac{1}{q}}{2} \notin A.
 \end{equation}
By \eqref{equnifsymnumber1}, \eqref{equnifsymnumber2}, and the fact that $(x_n)$ and $(y_n)$ are sequences in $A$ and $\frac{x_n+y_n}{2}\in A$ for every $n\in \mathbb N$, we obtain $x_n=y_n$ for every $n\in \mathbb N$, as asserted.
\end{proof}

\begin{corollary}\label{345hold}
The relations \eqref{eq3}, \eqref{eq4} and \eqref{eq5} hold.
\end{corollary}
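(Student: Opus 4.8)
The plan is to assemble Corollary \ref{345hold} entirely from the forward implications and counterexamples already established, since each of the three relations has exactly the ingredients needed scattered through the preceding material. I would first dispatch the two forward implications: the assertion $f\in USC \Rightarrow f\in SC$ in (\ref{eq3}) is precisely Proposition \ref{unifsymdiapro2}, and the assertion $f\in UC \Rightarrow f\in USC$ in (\ref{eq4}) is precisely Theorem \ref{unifsymdiathm1}. So nothing new is required for these two directions; I only need to cite them.

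Next I would handle the failures of the converses together with the incomparability statement (\ref{eq5}) using the two worked examples. Example \ref{unifsymdiaexam3} exhibits $f(x)=1/x$ on $(0,\infty)$, which is continuous (hence, by (\ref{eq2}), symmetrically continuous) yet is shown there, via the sequential criterion, not to be uniformly symmetrically continuous. This single example does double duty: it witnesses that the converse of (\ref{eq3}) fails, that is, a symmetrically continuous function need not be uniformly symmetrically continuous, and it simultaneously shows $C\nsubseteq USC$, giving one half of (\ref{eq5}).

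For the remaining pieces I would invoke Example \ref{unifsymdiaexam5}, namely the function defined on the reciprocals of odd primes together with $0$, which is uniformly symmetrically continuous but discontinuous at $0$. This directly yields $USC\nsubseteq C$, the other half of (\ref{eq5}). To conclude the failure of the converse of (\ref{eq4}), i.e.\ that a uniformly symmetrically continuous function need not be uniformly continuous, I would note that by relation (\ref{eq1}) every uniformly continuous function is continuous; contrapositively, the function of Example \ref{unifsymdiaexam5}, being discontinuous at $0$, cannot be uniformly continuous, even though it is uniformly symmetrically continuous. The only step beyond bare citation is this brief contrapositive argument linking discontinuity to the failure of uniform continuity, and it is entirely routine. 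I expect no genuine obstacle here, since all of the substantive work has been front-loaded into Theorem \ref{unifsymdiathm1} and Examples \ref{unifsymdiaexam3} and \ref{unifsymdiaexam5}; the corollary is essentially a bookkeeping statement that collects those consequences.
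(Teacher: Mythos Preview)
Your proposal is correct and matches the paper's own proof essentially line for line: the paper also derives (\ref{eq3}) from Proposition \ref{unifsymdiapro2} and Example \ref{unifsymdiaexam3}, (\ref{eq5}) from Examples \ref{unifsymdiaexam3} and \ref{unifsymdiaexam5}, and (\ref{eq4}) from Theorem \ref{unifsymdiathm1} together with the observation that the function of Example \ref{unifsymdiaexam5}, being discontinuous, cannot be uniformly continuous.
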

\begin{proof} \eqref{eq3} follows from Proposition \ref{unifsymdiapro2} and Example \ref{unifsymdiaexam3}, and \eqref{eq5} follows from Example \ref{unifsymdiaexam5} and Example \ref{unifsymdiaexam3}. Since the function $f$ in Example \ref{unifsymdiaexam5} is not continuous on $A$, it is not uniformly continuous on $A$. So \eqref{eq4} follows from Theorem \ref{unifsymdiathm1} and Example \ref{unifsymdiaexam5}. This completes the proof.
\end{proof}

The use of prime numbers in Example \ref{unifsymdiaexam5} is not necessary. However, the next example shows that we cannot use $A = \left\{\frac{1}{n}\mid n\in \mathbb N\right\}\cup \left\{0\right\}$.
\begin{example}\label{unifsymdiaexam2.8real}
Let $A = \left\{\frac{1}{n}\mid n\in \mathbb N\right\}\cup \left\{0\right\}$. Let $f:A\to \mathbb R$ be given by $f\left(\frac{1}{n}\right) = 1$ for every $n\in \mathbb N$ and $f(0) = 0$. Then $f$ is not uniformly symmetrically continuous on $A$. To see this, let $x_n = \frac{1}{n}$ and $y_n = 0$ for every $n\in \mathbb N$. Then $(x_n), (y_n)$ are sequences in $A$, $\frac{x_n+y_n}{2} = \frac{1}{2n}\in A$ for every $n\in \mathbb N$, and $|x_n-y_n| = \frac{1}{n} \to 0$ as $n\to \infty$. But $|f(x_n)-f(y_n)| = 1$ which does not converge to zero. By Theorem \ref{unifsymcrithm1}, $f$ is not uniformly symmetrically continuous.
\end{example} 
Examples \ref{unifsymdiaexam5} and \ref{unifsymdiaexam2.8real} may seem artificial. For example, the reader may like to see the set $A$ satisfying the following condition. 
\begin{align}\label{eq2.9new}
\text{For every $a\in A$, there exists a sequence $(h_n)$ such that}\notag\\
\text{$h_n\to 0$, $h_n\neq 0$, $a\pm h_n\in A$ for every $n\in \mathbb N$.}
\end{align}
We show this in the next example (compare with Proposition \ref{prop4.1new}).
\begin{example}\label{unifsymdiaexam2.9real}
Let $B = \left\{b+\frac{\sqrt2}{4m+2} \mid \text{$b\in \mathbb Q$ and $m\in \mathbb Z$}\right\}$ and $A = B\cup\{0\}$. Define $f:A\to \mathbb R$ by
$$
f(x) = \begin{cases}
1, &\text{if $x\in B$};\\
0, &\text{if $x=0$}.
\end{cases}
$$
Then $A$ satisfies the condition \eqref{eq2.9new}. Moreover, $f$ is uniformly symmetrically continuous on $A$ but is not continuous.
\end{example}
\begin{proof}
We check that $A$ satisfies \eqref{eq2.9new}. If $a=0$, then we let $h_n = \frac1n+\frac{\sqrt2}{4n+2}$ for every $n\geq 1$ and note that $a-h_n = -\frac1n+\frac{\sqrt2}{4(-n-1)+2}$. If $a = b+\frac{\sqrt2}{4m+2}\in B$, then 
\begin{equation*}
a\pm \frac1n = \left(b\pm\frac{1}{n}\right)+\frac{\sqrt2}{4m+2}\in B.
\end{equation*}
This proves that $A$ satisfies \eqref{eq2.9new}. Obviously, the function $f$ is discontinuous at $0$. Next let $(x_n)$ and $(y_n)$ be sequences in $A$ such that $\frac{x_n+y_n}{2}\in A$ for every $n\in \mathbb N$ and $|x_n-y_n|\to 0$ as $n\to \infty$. Suppose for a contradiction that there exists $n\in \mathbb N$ such that $|f(x_n)-f(y_n)| = 1$. By the definition of $f$, one of $x_n$, $y_n$ is $0$ and the other is in $B$. Without loss of generality, suppose that $x_n = 0$ and $y_n = b+\frac{\sqrt2}{4m+2}$ for some $b\in \mathbb Q$ and $m\in \mathbb Z$. Since $\frac{x_n+y_n}{2}\in A$, $\frac{x_n+y_n}{2} = 0$ or $\frac{x_n+y_n}{2} = c+\frac{\sqrt2}{4\ell+2}$ for some $c\in \mathbb Q$ and $\ell\in \mathbb Z$. The first case leads to $y_n = 0$, which is not the case. The second case leads to 
$$
b+\frac{\sqrt2}{4m+2} = y_n = 2c+\frac{2}{4\ell+2}\sqrt2,
$$ 
which implies $b = 2c$ and $\frac{1}{4m+2} = \frac{2}{4\ell+2}$. So $4\ell+2 = 8m+4$. Since $8m+4$ is divisible by $4$ but $4\ell+2$ is not, we have a contradiction. Hence $|f(x_n)-f(y_n)| \neq 1$ for any $n\in \mathbb N$. Since $|f(x_n)-f(y_n)|$ is either $0$ or $1$, we see that $|f(x_n)-f(y_n)| = 0$ for every $n\in \mathbb N$. By Theorem \ref{unifsymcrithm1}, $f$ is uniformly symmetrically continuous on $A$. This completes the proof.  
\end{proof}

\section{Basic Properties and Examples of Uniformly Symmetrically Continuous Functions}
Although uniformly symmetrically continuous functions and uniformly continuous functions are different, they share some basic properties as will be shown in this section.
\begin{theorem}\label{unifsymproperties}
Let $f, g:A\to \mathbb R$ and $\alpha\in \mathbb R$. Suppose that $f$ and $g$ are uniformly symmetrically continuous on $A$. Then 
\begin{itemize}
\item[(i)] $\alpha f$, $f+g$, and $f-g$ are uniformly symmetrically continuous on $A$.
\item[(ii)] If $f$ and $g$ are bounded on $A$, then $fg$ is uniformly symmetrically continuous on $A$.
\item[(iii)] If $f$ is bounded on $A$ and $g$ is bounded away from zero, then $\frac{f}{g}$ is uniformly symmetrically continuous on $A$.
\end{itemize} 
\end{theorem}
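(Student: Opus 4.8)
The plan is to reduce every assertion to the sequential criterion of Theorem \ref{unifsymcrithm1}. Throughout, I would fix sequences $(x_n)$ and $(y_n)$ in $A$ satisfying $|x_n - y_n| \to 0$ as $n \to \infty$ and $\frac{x_n + y_n}{2} \in A$ for every $n$; by hypothesis together with Theorem \ref{unifsymcrithm1} we then already know that $|f(x_n) - f(y_n)| \to 0$ and $|g(x_n) - g(y_n)| \to 0$. For each of the three parts it suffices to verify that the corresponding difference $|F(x_n) - F(y_n)|$ tends to $0$, where $F$ is $\alpha f$, $f \pm g$, $fg$, or $\frac{f}{g}$, since the criterion then yields uniform symmetric continuity of $F$.

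For part (i) I would argue directly from scaling and the triangle inequality: $|\alpha f(x_n) - \alpha f(y_n)| = |\alpha|\,|f(x_n) - f(y_n)| \to 0$, and
$$
|(f \pm g)(x_n) - (f \pm g)(y_n)| \le |f(x_n) - f(y_n)| + |g(x_n) - g(y_n)| \to 0,
$$
which covers both $f+g$ and $f-g$ at once.

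For part (ii), let $M > 0$ be a common bound, so that $|f| \le M$ and $|g| \le M$ on $A$. The standard add-and-subtract estimate gives
$$
|f(x_n)g(x_n) - f(y_n)g(y_n)| \le |f(x_n)|\,|g(x_n) - g(y_n)| + |g(y_n)|\,|f(x_n) - f(y_n)|,
$$
which is at most $M\big(|g(x_n) - g(y_n)| + |f(x_n) - f(y_n)|\big) \to 0$. Here the boundedness hypothesis is exactly what makes the two mixed terms collapse; this is the crux of the argument and parallels the familiar fact that a product of uniformly continuous functions need not be uniformly continuous without a boundedness assumption.

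For part (iii), I would reduce to part (ii). Since $g$ is bounded away from zero, there is an $m > 0$ with $|g(x)| \ge m$ for all $x \in A$, so $\frac{1}{g}$ is bounded by $\frac{1}{m}$. Moreover $\frac{1}{g}$ is itself uniformly symmetrically continuous, because
$$
\left|\frac{1}{g(x_n)} - \frac{1}{g(y_n)}\right| = \frac{|g(x_n) - g(y_n)|}{|g(x_n)|\,|g(y_n)|} \le \frac{|g(x_n) - g(y_n)|}{m^2} \to 0.
$$
As $f$ is bounded and $\frac{1}{g}$ is both bounded and uniformly symmetrically continuous, applying part (ii) to the pair $f$ and $\frac{1}{g}$ shows that $\frac{f}{g} = f \cdot \frac{1}{g}$ is uniformly symmetrically continuous, which completes the proof.
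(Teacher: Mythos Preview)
Your proof is correct and follows essentially the same route as the paper: both arguments set up the sequential criterion of Theorem~\ref{unifsymcrithm1}, dispatch (i) via scaling and the triangle inequality, handle (ii) by the standard add-and-subtract estimate under the boundedness hypothesis, and reduce (iii) to (ii) by first verifying that $\tfrac{1}{g}$ is bounded and uniformly symmetrically continuous. The only cosmetic difference is that the paper keeps separate bounds $M_1,M_2$ for $f$ and $g$ while you take a single common bound $M$, which is immaterial.
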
 
\begin{proof}
Let $(x_n)$ and $(y_n)$ be sequences in $A$, $|x_n-y_n|\to 0$ as $n\to \infty$, and $\frac{x_n+y_n}{2}\in A$ for every $n\in \mathbb N$. We can apply Theorem \ref{unifsymcrithm1} to the following equations:
\begin{align*}
|\alpha f(x_n)-\alpha f(y_n)| &= |\alpha||f(x_n)-f(y_n)|,\\
|(f+g)(x_n)-(f+g)(y_n)| &\leq |f(x_n)-f(y_n)|+|g(x_n)-g(y_n)|,\quad\text{and}\\
|(f-g)(x_n)-(f-g)(y_n)| &\leq |f(x_n)-f(y_n)|+|g(x_n)-g(y_n)|.
\end{align*}
From this, we obtain (i). For (ii), suppose that there are constants $M_1>0$ and $M_2>0$ such that $|f(x)|\leq M_1$ and $|g(x)|\leq M_2$ for all $x\in A$. Then 
\begin{align*}
|fg(x_n)-fg(y_n)|&= |(f(x_n)-f(y_n))(g(x_n)) + (f(y_n))(g(x_n)-g(y_n))|\\
&\leq |f(x_n)-f(y_n)||g(x_n)|+|f(y_n)||g(x_n)-g(y_n)|\\
&\leq M_2|f(x_n)-f(y_n)|+M_1|g(x_n)-g(y_n)|.
\end{align*}
From this, Theorem \ref{unifsymcrithm1} can be applied again. For (iii), assume that there exists a $\delta>0$ such that $|g(x)|\geq \delta$ for every $x\in A$. Then $\frac{1}{g}$ is bounded on $A$ and 
$$
\left|\frac{1}{g}(x_n)-\frac{1}{g}(y_n)\right| = \left|\frac{g(x_n)-g(y_n)}{g(x_n)g(y_n)}\right|\leq \frac{1}{\delta^2}\left|g(x_n)-g(y_n)\right|\to 0
$$ 
as $n\to \infty$. So by Theorem \ref{unifsymcrithm1} and by (ii), $\frac{f}{g}$ is uniformly symmetrically continuous on $A$. 
\end{proof}

The next example shows that the boundedness of $f$ and $g$ in Theorem \ref{unifsymproperties}(ii) cannot be omitted.
\begin{example}
Let $f, g:\mathbb R\to \mathbb R$ be given by $f(x) = g(x) = x$. Then $f$ and $g$ are uniformly symmetrically continuous on $\mathbb R$ but $fg$ is not. This can be proved by applying Theorem \ref{unifsymcrithm1} to $x_n = n+\frac{1}{n}$, $y_n = n$, and 
\begin{align*}
|fg(x_n)-fg(y_n)| &= \left|fg\left(n+\frac{1}{n}\right)-fg\left(n\right)\right| = \left|\left(n+\frac{1}{n}\right)^2-n^2\right|\\
& = 2+\frac{1}{n^2} \to 2 \neq 0,\quad\text{as $n\to \infty$}.
\end{align*}
\end{example}

If $A$ is a bounded subset of $\mathbb R$ and $f$ is uniformly continuous on $A$, then $f$ is bounded on $A$. This does not hold in the case of uniformly symmetrically continuous functions as shown in the next example.
\begin{example}\label{example3.3section3}
Let $A$ be the set defined in Example \ref{unifsymdiaexam5}. Define $f:A\to \mathbb R$ by $f\left(\frac{1}{p}\right) = p$ and $f(0)=0$. Then $f$ is not bounded on $A$. With the same argument in Example \ref{unifsymdiaexam5}, we obtain that $f$ is uniformly symmetrically continuous on $A$.
\end{example}

Next we give a result concerning sequences of uniformly symmetrically continuous functions.
\begin{theorem}\label{unifcontimplyunifsymthm}
Let $(f_n)$ be a sequence of uniformly symmetrically continuous functions on $A$. If $(f_n)$ converges uniformly to a function $f$ on $A$, then $f$ is uniformly symmetrically continuous on $A$.
\end{theorem}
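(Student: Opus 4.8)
The plan is to adapt the classical $\varepsilon/3$ argument used to prove that a uniform limit of uniformly continuous functions is again uniformly continuous. The only point requiring care is that uniform symmetric continuity constrains the pair of points $a+h$ and $a-h$ simultaneously, so we need control of $|f-f_N|$ that is uniform over all of $A$; this is precisely what uniform convergence supplies, and it is the reason pointwise convergence would not suffice.

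Concretely, let $\varepsilon>0$ be given. First I would invoke the uniform convergence of $(f_n)$ to fix an index $N$ with $|f_N(x)-f(x)|<\varepsilon/3$ for every $x\in A$. Next, since $f_N$ is uniformly symmetrically continuous on $A$, I would apply Definition \ref{newdefunifsym} to obtain a $\delta>0$ such that $|f_N(a+h)-f_N(a-h)|<\varepsilon/3$ whenever $a\in A$, $|h|<\delta$, and $a+h,a-h\in A$.

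The final step is a single triangle inequality. For any admissible choice of $a$ and $h$ (that is, $a\in A$, $|h|<\delta$, and $a\pm h\in A$), I would write
$$|f(a+h)-f(a-h)| \le |f(a+h)-f_N(a+h)| + |f_N(a+h)-f_N(a-h)| + |f_N(a-h)-f(a-h)|,$$
and bound each of the three summands by $\varepsilon/3$: the first and third by the uniform estimate on $|f-f_N|$ applied at the points $a+h$ and $a-h$ (both of which lie in $A$), and the middle term by the uniform symmetric continuity of $f_N$. This gives $|f(a+h)-f(a-h)|<\varepsilon$, which is exactly the defining condition, so $f$ is uniformly symmetrically continuous on $A$.

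I do not expect any serious obstacle; the argument is routine once one observes that uniform convergence furnishes a single $N$ and a single bound $\varepsilon/3$ valid simultaneously at both $a+h$ and $a-h$. One could alternatively route the proof through the sequential criterion of Theorem \ref{unifsymcrithm1}, but that would require a diagonal selection of indices and is less transparent than the direct $\varepsilon/3$ estimate above, so I would prefer the latter.
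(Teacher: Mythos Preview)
Your proof is correct and is essentially identical to the paper's own proof: both fix $N$ from uniform convergence to get $|f_N(x)-f(x)|<\varepsilon/3$ on $A$, invoke the uniform symmetric continuity of $f_N$ for the middle term, and conclude via the same three-term triangle inequality. There is nothing to add.
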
 
\begin{proof}
Assume that $f_n\rightarrow f$ uniformly on $A$ and let $\varepsilon>0$ be given. Then there is an $N\in\mathbb N$ such that 
\begin{equation}\label{unifcontimplyunifsymeq1}
\text{$|f_n(x)-f(x)|<\frac{\varepsilon}{3}$ for all $n\geq N$, $x\in A$}.
\end{equation}
Since $f_N$ is uniformly symmetrically continuous on $A$, there exists a $\delta>0$ such that 
\begin{equation}\label{unifcontimplyunifsymeq2}
|f_N(a+h)-f_N(a-h)|<\frac{\varepsilon}{3}.
\end{equation}
for all $h\in\mathbb R$, $a\in A$ such that $|h|<\delta$, $a+h, a-h\in A$.\\
\noindent Now if $|h|<\delta$, $a, a+h, a-h\in A$, then 
\begin{align*}
|f(a+h)-f(a-h)|&\leq |f(a+h)-f_N(a+h)|+|f_N(a+h)-f_N(a-h)|\\
&\quad+|f_N(a-h)-f(a-h)|\\
&<\frac{\varepsilon}{3}+\frac{\varepsilon}{3}+\frac{\varepsilon}{3} = \varepsilon.
\end{align*}
This shows that $f$ is uniformly symmetrically continuous on $A$.
\end{proof}

The next example shows that pointwise limit of a sequence of uniformly symmetrically continuous functions may not be uniformly symmetrically continuous. In fact, it may not even be symmetrically continuous. 
\begin{example}
For each $n\in\mathbb N$, let $f_n:[0,2]\rightarrow \mathbb R$ be given by 
$$
f_n(x) = 
\begin{cases}
x^n,\quad&\text{if $x\leq 1$};\\
1,\quad&\text{if $x> 1$}.
\end{cases}
$$
Since $f_n$ is continuous on a compact set $[0,2]$, it is uniformly continuous on $[0,2]$. By Theorem \ref{unifsymdiathm1}, $f_n$ is uniformly symmetrically continuous on $[0,2]$. The pointwise limit of $(f_n)$ is given by $f(x)=0$ if $x<1$ and $f(x)=1$ if $x\geq 1$. Since $\lim_{x\rightarrow 1^-}f(x)=0\neq 1 = \lim_{x\rightarrow 1^+}f(x)$, $f$ is not symmetrically continuous at $x=1$, by Proposition \ref{pong3star}. 
\end{example}

Next we show more examples to give a clearer picture of uniformly symmetrically continuous functions.
\begin{example}\label{exam51}
Any function $f:\mathbb Z\to \mathbb R$ is uniformly symmetrically continuous on $\mathbb Z$. This is because if $\varepsilon > 0$ is given, we can choose $\delta = \frac{1}{2}$ so that if $h\in \mathbb R$, $|h|<\delta$, $a\in \mathbb Z$, $a+h, a-h\in \mathbb Z$, then $h=0$, and therefore $|f(a+h)-f(a-h)| = |f(a)-f(a)| = 0 < \varepsilon$. In general, if the domain $A$ of $f$ is a finite set or a uniformly discrete set, then $f$ is uniformly symmetrically continuous on $A$. (Recall that a nonempty subset $A$ of $\mathbb R$ is said to be uniformly discrete if there exists a $\delta > 0$ such that $(a-\delta,a+\delta)\cap A = \{a\}$ for every $a\in A$.) We will leave the details to the reader
\end{example}

If $f:A\to \mathbb R$ is uniformly symmetrically continuous on $A$ with respect to $B\subseteq A$, then it is easy to see that $f\mid_{B}$ is uniformly symmetrically continuous on $B$. But the converse does not hold (simply take $A = \mathbb R$, $B = \mathbb Z$, and $f(x) = \frac1x$ if $x\neq 0$ and $f(x) = 0$ if $x=0$).

Considering Examples \ref{unifsymdiaexam5} and \ref{unifsymdiaexam2.9real}, it is natural to ask whether there is a continuous function $f:A\to \mathbb R$ such that the interior of $A$ is not empty and $f$ is uniformly symmetrically continuous on $A$ but $f$ is not uniformly continuous on $A$. We will construct such a function in the next example. 

\begin{example}\label{unifsymexam3}
Define a sequence $(a_n)$ recursively by $a_1=1$, $a_2=a_1+\frac{1}{2}$, $a_3 = a_2+1$, $a_4 = a_3+\frac{1}{2}$, and for $n\geq 2$, $a_{4n-3} = a_{4n-4}+1$, $a_{4n-2} = a_{4n-3}+\frac{1}{n+1}$, $a_{4n-1} = a_{4n-2}+\frac{1}{n}$, $a_{4n} = a_{4n-1}+\frac{1}{n+1}$. Let $A = \bigcup_{i=1}^{\infty}\left[a_{2i-1},a_{2i}\right]$ and let $f:A\rightarrow \mathbb R$ be defined by $f(x) = 2i-1$ if $x\in \left[a_{2i-1},a_{2i}\right]$. Then $f$ is continuous and uniformly symmetrically continuous on $A$ but $f$ is not uniformly continuous on $A$.
\end{example}
\begin{proof}
Since $f$ is a constant on each interval $[a_{2i-1},a_{2i}]$, it is not difficult to see that $f$ is continuous on $A$. Next, for each $n\in\mathbb N$, let $x_n = a_{4n-1}$, $y_n = a_{4n-2}$. Then $(x_n)$, $(y_n)$ are sequences in $A$, $\left|x_n-y_n\right| = \frac{1}{n}\rightarrow 0$ as $n\rightarrow \infty$ but $\left|f(x_n)-f(y_n)\right| = 2$ for every $n\in \mathbb N$. So by Proposition \ref{propBa}, $f$ is not uniformly continuous on $A$. Next we will show that $f$ is uniformly symmetrically continuous on $A$ by applying Theorem \ref{unifsymcrithm1}. Let $(x_n)$, $(y_n)$ be sequences in $A$, $|x_n-y_n|\to 0$ as $n\to \infty$, and $\frac{x_n+y_n}{2}\in A$ for every $n\in \mathbb N$. Since $|x_n-y_n|\to 0$, there exists $N\in \mathbb N$ such that 
\begin{equation}\label{eq1proofofexam3.9}
\text{$|x_n-y_n|<1$ for every $n\geq N$}.
\end{equation}
By the definition of $(a_n)$,
\begin{equation}\label{eq2proofofexam3.9}
\text{$|a_{4n}-a_{4n+1}| = 1$ for every $n\geq 1$}.
\end{equation}
Let $n\geq N$. By \eqref{eq1proofofexam3.9}, \eqref{eq2proofofexam3.9}, and the construction of $A$, we obtain that $x_n$ and $y_n$ lie in the interval $[a_{4k-3},a_{4k-2}]\cup [a_{4k-1},a_{4k}]$ for some $k\in \mathbb N$. Suppose that $x_n$ and $y_n$ lie in different intervals, say $x_n\in [a_{4k-3},a_{4k-2}]$ and $y_n \in [a_{4k-1},a_{4k}]$. Then 
\begin{align*}
\frac{x_n+y_n}{2} &\geq \frac{a_{4k-3}+a_{4k-1}}{2} = \frac{a_{4k-3}+a_{4k-3}+\frac{1}{k}+\frac{1}{k+1}}{2}\\
&= a_{4k-3}+ \frac{\frac{1}{k}+\frac{1}{k+1}}{2} > a_{4k-3}+\frac{1}{k+1} = a_{4k-2},\quad\text{and}
\end{align*}
\begin{align*}
\frac{x_n+y_n}{2} &\leq \frac{a_{4k-2}+a_{4k}}{2} = \frac{a_{4k-2}+a_{4k-2}+\frac{1}{k}+\frac{1}{k+1}}{2}\\
&= a_{4k-2}+ \frac{\frac{1}{k}+\frac{1}{k+1}}{2} < a_{4k-2}+\frac{1}{k} = a_{4k-1}.
\end{align*}
Therefore $\frac{x_n+y_n}{2}\in (a_{4k-2},a_{4k-1})$. Hence $\frac{x_n+y_n}{2}\notin A$, a contradiction. This shows that $x_n$ and $y_n$ lie in the same interval. Thus $|f(x_n)-f(y_n)| = 0$ for every $n\geq N$. This completes the proof.
\end{proof}

The construction in Example \ref{unifsymexam3} may seem complicated. The next example shows that a simpler domain does not give the desired result.
\begin{example}
Let $a_1 = 0$, $a_n = a_{n-1}+\frac{1}{n-1}$ for $n\geq 2$. Let $A = \bigcup_{i=1}^\infty [a_{2i-1},a_{2i}]$ and $f:A\to \mathbb R$ given by $f(x) = 2i-1$ if $x\in [a_{2i-1},a_{2i}]$. Similar to Example \ref{unifsymexam3}, it is easy to see that $f$ is continuous but not uniformly continuous on $A$. However, $f$ is not uniformly symmetrically continuous on $A$. To see this, let $x_n = a_{2n-1}$, $y_n = a_{2n+1}$ for every $n\in \mathbb N$. Then $(x_n)$ and $(y_n)$ are sequences in $A$, $|x_n-y_n| = \frac{1}{2n-1}+\frac{1}{2n}\to 0$ as $n\to \infty$, and for every $n\in \mathbb N$, we have
\begin{align*}
\frac{x_n+y_n}{2} &= \frac{a_{2n-1}+a_{2n-1}+\frac{1}{2n-1}+\frac{1}{2n}}{2} = a_{2n-1}+\frac{\frac{1}{2n-1}+\frac{1}{2n}}{2}\\
& \in \left[a_{2n-1},a_{2n-1}+\frac{1}{2n-1}\right] = [a_{2n-1},a_{2n}] \subseteq A.
\end{align*}
But $|f(x_n)-f(y_n)| = 2 \not\rightarrow 0$ as $n\to \infty$.
\end{example}

From Example \ref{unifsymexam3}, it is natural to ask if it is possible to give a similar result with a bounded set $A$. We show this in the next example.

\begin{example}\label{exam3.7}
Define a sequence $(a_n)$ by $a_1 = \frac12$ and for $n\geq 2$, 
$$
a_n = \begin{cases}
a_{n-1}+\frac{1}{2^n}, &\text{if $n$ is even};\\
a_{n-1}+\frac{1}{2^{n-1}}, &\text{if $n$ is odd}.
\end{cases}
$$
Let $A = \bigcup_{i=1}^\infty [a_{2i-1},a_{2i})$ and let $f:A\to \mathbb R$ be given by 
$$
f(x) = 2i-1\quad \text{if $x\in [a_{2i-1},a_{2i})$}.
$$ 
Then $A$ is a bounded set. Moreover, $f$ is continuous and uniformly symmetrically continuous on $A$ but $f$ is not uniformly continuous on $A$. 
\end{example}
\begin{proof} 
Similar to Example \ref{unifsymexam3}, $f$ is continuous but is not uniformly continuous on $A$. By the construction of $(a_n)$, we see that 
$$
a_n\leq \frac12+\frac{2}{2^2}+\frac{2}{2^4}+\frac{2}{2^6}+\cdots = \frac76\quad\text{for every $n\in \mathbb N$}.
$$
Therefore $A \subseteq \left[\frac12,\frac76\right]$. So $A$ is bounded. Next we assert that if $x, y\in A$ and $\frac{x+y}{2}\in A$, then $x$ and $y$ lie in the same interval. So suppose for a contradiction that there are $x, y\in A$ such that $\frac{x+y}{2}\in A$ but $x$ and $y$ lie in different intervals. Without loss of generality, assume that $x\in [a_{2k-1},a_{2k})$ and $y\in [a_{2m-1},a_{2m})$ for some $m > k \geq 1$. Then 
\begin{align*}
x+y &< a_{2k}+a_{2m}\\
&=a_{2k}+\left(a_{2m-1}+\frac{1}{2^{2m}}\right)\\
&=a_{2k}+\left(a_{2m-2}+\frac{1}{2^{2m-2}}+\frac{1}{2^{2m}}\right)\\
&=a_{2k}+\left(a_{2m-3}+\frac{2}{2^{2m-2}}+\frac{1}{2^{2m}}\right)\\
&\;\vdots\\
&=a_{2k}+\left(a_{2k}+\frac{1}{2^{2k}}+\left(\frac{2}{2^{2k+2}}+\frac{2}{2^{2k+4}}+\cdots+\frac{2}{2^{2m-2}}\right)+\frac{1}{2^{2m}}\right).
\end{align*}
Therefore
\begin{align}\label{exam3.8eq1}
\frac{x+y}{2} &< a_{2k}+\frac12\left(\frac{1}{2^{2k}}\right)+\frac12\left(\frac{2}{2^{2k+2}}+\frac{2}{2^{2k+4}}+\frac{2}{2^{2k+6}}+\cdots\right)\notag\\
&= a_{2k}+\frac12\left(\frac{1}{2^{2k}}\right)+\frac13\left(\frac{1}{2^{2k}}\right)\notag\\
&< a_{2k}+\frac{1}{2^{2k}} = a_{2k+1}.
\end{align}
Similarly,
$$
x+y\geq a_{2k-1}+a_{2m-1} = a_{2k-1} + \left(a_{2k-1}+\frac{2}{2^{2k}}+\frac{2}{2^{2k+2}}+\cdots+\frac{2}{2^{2m-2}}\right).
$$
So
\begin{align}\label{exam3.8eq2}
\frac{x+y}{2} &\geq a_{2k-1} + \frac{1}{2^{2k}}+\frac{1}{2^{2k+2}}+\cdots+\frac{1}{2^{2m-2}}\notag\\
&\geq a_{2k-1}+\frac{1}{2^{2k}} = a_{2k}.
\end{align}
From \eqref{exam3.8eq1} and \eqref{exam3.8eq2}, we see that $\frac{x+y}{2}\in [a_{2k},a_{2k+1})$. Hence $\frac{x+y}{2}\notin A$, a contradiction. Thus our assertion is proved. 

Now if $(x_n)$ and $(y_n)$ are sequences in $A$ such that $|x_n-y_n|\to 0$ as $n\to \infty$ and $\frac{x_n+y_n}{2}\in A$ for every $n\in \mathbb N$, then $x_n$ and $y_n$ lie in the same interval for every $n\in \mathbb N$, and thus $|f(x_n)-f(y_n)|=0$ for every $n\in \mathbb N$. By Theorem \ref{unifsymcrithm1}, $f$ is uniformly symmetrically continuous on $A$.
\end{proof}

\section{Uniformly Symmetrically Continuous Functions and Uniformly Continuous Functions}
By considering Theorem \ref{unifsymdiathm1}, it is natural to ask that in what domains the converse of Theorem \ref{unifsymdiathm1} holds? As mentioned in Proposition \ref{unifsymmathm1}, Marcus shows that if the domain $A$ is an interval, then the converse of Theorem \ref{unifsymdiathm1} holds. We will extend this result to the case of finite union of intervals. Let us recall some basic definitions to be used in what follows. For $A\subseteq \mathbb R$, the closure of $A$, denoted by $\overline A$, is defined by $\overline{A} = \{x\in \mathbb R\;|\; \forall \varepsilon>0, (x-\varepsilon,x+\varepsilon)\cap A\neq\emptyset\}$. In addition, for nonempty subsets $A, B$ of $\mathbb R$, the distance between $A$ and $B$, denoted by $d(A,B)$, is defined by $d(A,B) = \inf\{|a-b|\;:\;a\in A, b\in B\}$.

\begin{lemma}\label{unifsymchalem1}
For each $i \in \{1,2,\ldots, k\}$, let $B_i$ be a nonempty set, and $f_i:B_i\rightarrow \mathbb R$ uniformly continuous on $B_i$. Suppose that $A = \bigcup_{i=1}^kB_i$ and $\inf\left\{d(B_i,B_j)\mid i\neq j\right\}$ $> 0$. Then the function $f: A \rightarrow \mathbb R$ defined by $f(x) = f_i(x)$ if $x\in B_i$ is uniformly continuous on $A$.
\end{lemma}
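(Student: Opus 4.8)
The plan is to exploit the positive separation hypothesis to reduce the problem to the uniform continuity of each individual piece $f_i$. First I would record the key consequence of the hypothesis: setting $\rho = \inf\{d(B_i,B_j)\mid i\neq j\} > 0$, any two points lying in \emph{distinct} pieces are at distance at least $\rho$ from one another. In particular the $B_i$ are pairwise disjoint (a common point of $B_i$ and $B_j$ would force $d(B_i,B_j)=0$), so $f$ is well defined. Since there are only finitely many pairs $(i,j)$, there is no issue about the infimum being attained, but positivity is all I need.

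Next I would invoke uniform continuity of each $f_i$. Given $\varepsilon > 0$, for each $i\in\{1,\dots,k\}$ choose $\delta_i > 0$ so that $|f_i(x)-f_i(y)| < \varepsilon$ whenever $x,y\in B_i$ and $|x-y| < \delta_i$. Then I would set
$$
\delta = \min\{\delta_1,\delta_2,\ldots,\delta_k,\rho\},
$$
which is positive because it is the minimum of finitely many positive numbers.

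The heart of the argument is the \emph{same-piece} step. Take $x,y\in A$ with $|x-y| < \delta$. If $x$ and $y$ belonged to different pieces, say $x\in B_i$ and $y\in B_j$ with $i\neq j$, then $|x-y|\geq d(B_i,B_j)\geq \rho \geq \delta$, contradicting $|x-y| < \delta$. Hence $x$ and $y$ lie in a common $B_i$, and since $|x-y| < \delta \leq \delta_i$ we obtain $|f(x)-f(y)| = |f_i(x)-f_i(y)| < \varepsilon$. This establishes uniform continuity of $f$ on $A$.

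I do not expect a genuine obstacle here; the proof is short once $\delta$ is chosen correctly. The one point deserving care is precisely where the separation hypothesis is used: the bound $\delta \leq \rho$ is exactly what prevents a pair of nearby points from straddling two pieces. It is worth noting that this is indispensable, since if $\inf\{d(B_i,B_j)\mid i\neq j\}$ were allowed to vanish the conclusion can fail (two pieces may accumulate on each other while $f$ jumps between them). Keeping the finiteness of $k$ in mind guarantees both that $\delta$ is a minimum of finitely many positive quantities and that $\rho > 0$.
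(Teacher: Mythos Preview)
Your proof is correct and follows essentially the same approach as the paper's: use the positive separation $\rho$ to force any two sufficiently close points into a single $B_i$, then appeal to the uniform continuity of that $f_i$. Your write-up simply makes explicit the choice $\delta=\min\{\delta_1,\ldots,\delta_k,\rho\}$ that the paper leaves implicit.
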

\begin{proof}
Since $\inf\left\{d(B_i,B_j)\mid i\neq j\right\} > 0$, $B_i\cap B_j = \emptyset$ for every $i\neq j$. In addition, if $\varepsilon > 0$ is given, we can choose a $\delta>0$ so that for $x, y\in A$, $|x-y|<\delta$ implies $x$ and $y$ lie in the same set $B_i$ and thus $|f(x)-f(y)| = |f_i(x)-f_i(y)|$. 
\end{proof}

\begin{theorem}\label{unifsymchaththmequivthm8}
Let $I_1, I_2,\ldots,I_k$ be nonempty intervals such that $I_i\cap I_j = \emptyset$ for all $i\neq j$, $A = \bigcup_{i=1}^kI_i$ and $f:A\rightarrow \mathbb R$. Then $f$ is uniformly symmetrically continuous on $A$ if and only if $f$ is uniformly continuous on $A$.
\end{theorem}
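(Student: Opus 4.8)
The plan is to prove the two implications separately. The backward direction is immediate: if $f$ is uniformly continuous on $A$, then $f$ is uniformly symmetrically continuous on $A$ by Theorem \ref{unifsymdiathm1}, with no use of the interval structure at all. So all the work goes into the forward direction, and I would organize it around two ideas: reduce to the single-interval case on each $I_i$ (where Proposition \ref{unifsymmathm1} of Marcus applies), and then glue the pieces together across the finitely many shared endpoints. Throughout I would phrase things using the sequential criterion of Theorem \ref{unifsymcrithm1} for uniform symmetric continuity and Proposition \ref{propBa} for uniform continuity, since the midpoint bookkeeping is cleanest in sequential form.

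First I would observe that the restriction $f|_{I_i}$ is uniformly symmetrically continuous on $I_i$ for each $i$: if $a, a+h, a-h \in I_i$, then since $I_i$ is an interval (hence convex) the point $a = \frac{(a+h)+(a-h)}{2}$ already lies in $I_i \subseteq A$, so the defining inequality for uniform symmetric continuity of $f$ on $A$ applies verbatim. By Proposition \ref{unifsymmathm1} each $f|_{I_i}$ is therefore uniformly continuous on the interval $I_i$, and consequently maps Cauchy sequences to Cauchy sequences and has finite one-sided limits at its finite endpoints. Next I would record the geometry: two disjoint intervals on the line are linearly ordered, so after relabeling I may assume $I_1, \ldots, I_k$ run left to right, and a pair $I_i, I_j$ with $i \neq j$ can contain points arbitrarily close to each other only when they are adjacent and share a common endpoint $c = \sup I_i = \inf I_j$, which by disjointness belongs to at most one of them.

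The crux of the argument, and the step I expect to be the main obstacle, is showing that the two one-sided limits agree at such a shared endpoint $c$. Write $L^- = \lim_{x \to c^-} f|_{I_i}(x)$ and $L^+ = \lim_{x \to c^+} f|_{I_j}(x)$. In the generic case where both intervals are nondegenerate near $c$, I would fix small $\epsilon > 0$ and set $a = c + \epsilon$, $h = 2\epsilon$, so that $a - h = c - \epsilon \in I_i$, $a + h = c + 3\epsilon \in I_j$, and the midpoint $a = c + \epsilon \in I_j \subseteq A$. This is a legitimate symmetric configuration for $f$ on $A$, and the deliberate asymmetry (offset $\epsilon$ on one side versus $3\epsilon$ on the other) is exactly what pushes the midpoint onto the correct side so that it stays inside $A$ rather than falling into the gap. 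Letting $\epsilon \to 0^+$ and invoking uniform symmetric continuity via Theorem \ref{unifsymcrithm1} gives $|f(c+3\epsilon) - f(c-\epsilon)| \to 0$, while these values converge to $L^+$ and $L^-$ respectively; hence $L^+ = L^-$. The degenerate possibility that one of the two intervals is a single point $\{c\}$ must be treated separately, but it is easier: then the midpoint of a point $x_n \in I_i$ near $c$ and the point $c$ itself automatically lands in $I_i \subseteq A$, so uniform symmetric continuity applies directly and forces the limit to equal $f(c)$.

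Finally I would assemble these facts through Proposition \ref{propBa}. Given sequences $(x_n), (y_n)$ in $A$ with $|x_n - y_n| \to 0$, I pass to a subsequence and use the pigeonhole principle (there are only finitely many intervals) to assume $x_n \in I_i$ and $y_n \in I_j$ for fixed $i, j$. If $i = j$, convexity puts the midpoint in $A$ and Theorem \ref{unifsymcrithm1} yields $|f(x_n) - f(y_n)| \to 0$; if $i \neq j$ and $d(I_i, I_j) > 0$, then $|x_n - y_n| \geq d(I_i, I_j) > 0$ is impossible, so the intervals must be adjacent with $x_n, y_n \to c$, whence $f(x_n) \to L^-$, $f(y_n) \to L^+$, and the junction-matching step gives $L^- = L^+$, so again $|f(x_n) - f(y_n)| \to 0$. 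This shows $f$ is uniformly continuous on $A$ and completes the harder implication. One might hope to invoke Lemma \ref{unifsymchalem1} directly, but that lemma requires $\inf\{d(B_i,B_j) : i \neq j\} > 0$, which fails precisely at the shared endpoints, so the junction analysis above cannot be bypassed.
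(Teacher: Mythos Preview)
Your proof is correct and takes a genuinely different route from the paper's. Both arguments begin the same way: the backward implication is Theorem~\ref{unifsymdiathm1}, and for the forward implication each restriction $f|_{I_i}$ is uniformly symmetrically continuous on the interval $I_i$, hence uniformly continuous there by Proposition~\ref{unifsymmathm1}. The divergence is in how the junctions are handled. The paper works directly in $\varepsilon$--$\delta$ form: given $x\in I_1=(b,c)$ and $y\in I_2=(c,d)$ with $|x-y|<\delta$ and midpoint equal to $c\notin A$, it inserts an auxiliary point $x_0\in(x,c)$ so that both midpoints $\tfrac{x+x_0}{2}$ and $\tfrac{x_0+y}{2}$ land inside $A$, splits $|f(x)-f(y)|\leq |f(x)-f(x_0)|+|f(x_0)-f(y)|$, and then packages the resulting pieces into sets $B_1,\ldots,B_m$ with $\inf_{i\neq j} d(B_i,B_j)>0$ so that Lemma~\ref{unifsymchalem1} finishes the job. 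You instead exploit uniform continuity on each $I_i$ to obtain finite one-sided limits $L^{\pm}$ at a shared endpoint $c$, and force $L^-=L^+$ via the deliberately asymmetric configuration $c-\epsilon,\ c+\epsilon,\ c+3\epsilon$ (whose midpoint $c+\epsilon$ stays in $A$); the global statement then follows from Proposition~\ref{propBa} together with a pigeonhole/subsequence reduction. Your approach makes the obstruction at the junction explicit (it is precisely a mismatch of one-sided limits) and treats the degenerate singleton case cleanly; the paper's approach avoids ever invoking existence of limits and stays closer to the raw definition. One small point worth a sentence in a write-up: your pigeonhole step only shows $|f(x_n)-f(y_n)|\to 0$ along subsequences with fixed $(i,j)$, so the full-sequence conclusion tacitly uses the standard ``every subsequence has a further subsequence converging to $0$'' device.
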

\begin{proof}
By Theorem \ref{unifsymdiathm1}, we only need to prove one direction. Assume that $f$ is uniformly symmetrically continuous on $A$. Then $f$ is uniformly symmetrically continuous on each interval $I_i$. By Proposition \ref{unifsymmathm1}, $f$ is uniformly continuous on each $I_i$, $i\in \{1, 2, \ldots,k\}$. If for all $i\neq j$, $\bar{I_i}\cap \bar{I_j} = \emptyset$, then by Lemma \ref{unifsymchalem1}, $f$ is uniformly continuous on $A$, as required. So we need to consider only the case where there are $i, j$ such that $\bar{I_i}\cap \bar{I_j} \neq \emptyset$. In addition, if $\bar{I_i}\cap I_j \neq \emptyset$ or $I_i\cap \bar{I_j} \neq \emptyset$, then $I_i\cup I_j$ is an interval and we can write $A = \bigcup_{i=1}^\ell J_i$ where $J_1,\ldots,J_\ell$ are intervals and $J_i\cap \bar{J_j}=\emptyset$ for all $i\neq j$. So assume that $I_i\cap \bar{I_j} = \emptyset$ for all $i\neq j$. Assume $I_1=(b,c)$, $I_2=(c,d)$ where $c\in\mathbb R$ $b,d\in\mathbb R\cup\{-\infty,+\infty\}$. To show that $f$ is uniformly continuous on $I_1\cup I_2$, let $\varepsilon > 0$ be given. Since $f$ is uniformly symmetrically continuous on $I_1\cup I_2$, there exists a $\delta>0$ such that $|f(a+h)-f(a-h)|<\frac{\varepsilon}{2}$ for all $h\in \mathbb R$, $a\in I_1\cup I_2$, $|h|<\delta$, $a+h, a-h\in I_1\cup I_2$. Let $x,y\in I_1\cup I_2$ and $|x-y|<\delta$. If $\frac{x+y}{2}\in I_1\cup I_2$, then we let $a=\frac{x+y}{2}$, $h=\frac{x-y}{2}$. So that $|h|<\delta$, $\{a+h,a-h\} = \{x,y\}\subseteq I_1\cup I_2$ and hence
\begin{equation}\label{unifsymchatheq1}
|f(x)-f(y)| = |f(a+h)-f(a-h)| <\frac{\varepsilon}{2}.
\end{equation}
Assume that $\frac{x+y}{2}\notin I_1\cup I_2$. This occurs when $x$ and $y$ lie in a different interval and $\frac{x+y}{2}=c$. Suppose that $x\in I_1$ and $y\in I_2$. We can choose $x_0\in I_1$ which lie between $x$ and $c$. Then $\frac{x+x_0}{2}\in I_1$, $\frac{y+x_0}{2}\in I_2$. So we can use the argument as in \eqref{unifsymchatheq1} to obtain 
$$
|f(x)-f(y)|\leq |f(x)-f(x_0)|+|f(x_0)-f(y)|<\frac{\varepsilon}{2}+\frac{\varepsilon}{2}=\varepsilon.
$$ 
This shows that $f$ is uniformly continuous on $I_1\cup I_2$. We can repeat the above argument to any pair of interval $I_i, I_j$ such that $I_i\cap \bar{I_j} = \emptyset$. This shows that $f$ is uniformly continuous on $B_1, B_2, \ldots, B_m$ for some nonempty sets $B_1, B_2, \ldots, B_m$ such that $B_1\cup B_2\cup \cdots \cup B_m = A$ and $\inf\left\{d(B_i,B_j)\mid i\neq j\right\} > 0$. By Lemma \ref{unifsymchalem1}, $f$ is uniformly continuous on $A$. Hence the proof is complete.
\end{proof}

Examples \ref{unifsymexam3} and \ref{exam3.7} show that we cannot extend Theorem \ref{unifsymchaththmequivthm8} to the case where the domain is a union of infinite collection of intervals. Another kind of domain $A$ where uniform symmetric continuity of functions defined on $A$ are equivalent to uniform continuity is that $A$ is a symmetric set. We record this in the next proposition.  

\begin{proposition}\label{prop4.1new}
Let $A$ be a nonempty subset of $\mathbb R$. Then the following statements hold.
\begin{itemize}
\item[(i)] A function $f:A\to \mathbb R$ is uniformly symmetrically continuous on $A$ if and only if for every $\varepsilon > 0$, there exists a $\delta > 0$ such that $|f(x)-f(y)| < \varepsilon$ for all $x, y\in A$ satisfying $|x-y| < \delta$ and $\frac{x+y}{2}\in A$.
\item[(ii)] Suppose $A$ is a symmetric set, that is $\frac{x+y}{2}\in A$ for every $x, y\in A$, and $f:A\to \mathbb R$. Then $f$ is uniformly symmetrically continuous on $A$ if and only if $f$ is uniformly continuous on $A$.
\end{itemize} 
\end{proposition}
\begin{proof}
Suppose $f$ is uniformly symmetrically continuous and $\varepsilon>0$ is given. Let $\delta>0$ satisfy the condition in Definition \ref{newdefunifsym}. Then if $x, y\in A$, $|x-y|<\delta$, and $\frac{x+y}{2}\in A$, then we let $a = \frac{x+y}{2}$ and $h = \frac{x-y}{2}$ and use Definition \ref{newdefunifsym} to obtain 
$$
|f(x)-f(y)| = |f(a+h)-f(a-h)| < \varepsilon.
$$
For the converse, if $\varepsilon>0$ is given and $\delta > 0$ is obtained, we can replace $\delta$ by $\frac{\delta}{2}$ in the Definition \ref{newdefunifsym} to obtain that $f$ is uniformly symmetrically continuous. This proves (i). Then (ii) follows from (i) and Theorem \ref{unifsymdiathm1}.
\end{proof}

Example \ref{unifsymdiaexam2.9real} shows that if $A$ is not a symmetric set, a function $f:A\to \mathbb R$ may be continuous and uniformly symmetrically continuous on $A$ but is not uniformly continuous on $A$ even though $A$ satisfies the condition \eqref{eq2.9new}.
 
\section*{Acknowledgment}
We would like to thank Department of Mathematics, Faculty of Science, Silpakorn University for the support. We also would like to thank the referee who carefully read our article, gave a lot of useful suggestions, and pointed out some errors in the earlier draft of this article.

\end{document}